\title{Convergence of a linearly regularized nonlinear wave equation to the $p$-system}
\author[ERBAY et al.]{
\textbf{Hüsnü Ata ERBAY$^{1}$, Saadet ERBAY$^{1}$, Albert Kohen ERKİP$^{2}$\thanks{Correspondence: albert@sabanciuniv.edu}}\\
$^{1}$Department of Natural and Mathematical Sciences, Faculty of Engineering, Ozyegin University, \.{I}stanbul, Turkey, \\ ORCID iD: https://orcid.org/0000-0002-5167-609X\\
$^{1}$Department of Natural and Mathematical Sciences, Faculty of Engineering, Ozyegin University, \.{I}stanbul, Turkey, \\ ORCID iD: https://orcid.org/0000-0002-6080-4591\\
$^{2}$Faculty of Engineering and Natural Sciences, Sabanci University, \.{I}stanbul, Turkey, \\
ORCID iD: https://orcid.org/0000-0001-6353-9386

\\ [1.8em]

\rec{.201}
\acc{.201}
\finv{..201}
}
\newcommand{\bc}{\begin{center}}
\newcommand{\ec}{\end{center}}
\numberwithin{equation}{section}
\newtheorem{theorem}{Theorem}[section]
\newtheorem{lemma}[theorem]{Lemma}
\newtheorem{remark}[theorem]{Remark}
\renewcommand{\phi}{\varphi}
\begin{document}

\maketitle

\begin{abstract}
We consider  a second-order nonlinear wave equation with a linear convolution term. When the convolution operator is taken as the identity operator, our equation reduces to the classical elasticity equation which can be written as a $p$-system of first-order differential equations. We first establish the local well-posedness of the Cauchy problem. We then investigate the behavior of solutions to the Cauchy problem in the limit as the kernel function of the convolution integral approaches to the Dirac delta function, that is, in the vanishing dispersion limit.  We consider  two different types of the vanishing dispersion limit behaviors for the convolution operator depending on the form of the kernel function. In both cases, we show that the solutions  converge strongly to the corresponding solutions of the classical elasticity equation.

\keywords{Nonlinear elasticity, Long wave limit, Vanishing dispersion limit, Nonlocal}
\end{abstract}

\section{Introduction}
\label{Sec:1}

Assuming that  $u=u(x,t)$ is a real-valued function and   $g$ is a sufficiently smooth nonlinear function satisfying $g(0)=g^{\prime }(0)=0$, we consider the nonlocal nonlinear equation
\begin{equation}
    u_{tt}=B u_{xx}+g(u)_{xx},  \label{nw}
\end{equation}%
where $B$ is the convolution operator  in the $x$-variable with the kernel (measure) $\mu$:
\begin{displaymath}
    (Bu)(x)=(\mu \ast u)(x)=\int_{\mathbb{R}} u(x-y)~d\mu(y).
\end{displaymath}%
Throughout the manuscript we assume that $\mu$ is  an even finite Borel measure on $\mathbb{R}$.  Being even implies that the  Fourier transform $\widehat{\mu }(\xi)$  of the measue $\mu(x)$ is real. We further assume that, for some constants $c_{1}$, $c_{2}$,
\begin{equation}
    0<c_{1}\leq  \widehat{\mu }(\xi )\leq c_{2}.  \label{kernelbound}
\end{equation}%
We note that the right-hand side of the inequality above is trivial with  $c_{2} =\vert \mu\vert(\mathbb{R})$. This condition implies that the operator $B$ is   a positive bounded operator on the Sobolev space $H^{s}(\mathbb{R})$ for any $s$.

If the operator $B$ is taken to be the identity operator $I$ (that is, if the kernel is  the Dirac delta measure  $\delta$),  (\ref{nw}) reduces to the classical elasticity equation
\begin{equation}
    u_{tt}=u_{xx}+g(u)_{xx}  \label{classical}
\end{equation}%
written in dimensionless variables.  Equation (\ref{classical}) is hyperbolic whenever $g^{\prime}(u)>-1$.  It  models the non-dispersive propagation of longitudinal waves in an elastic bar of infinite length, where $u$ denotes the strain defined by $u=w_{x}$ where $w(x,t)$ represents axial displacement at position $x$ and  time $t$. On the other hand, the linear dispersion relation $\xi \mapsto \omega^{2}(\xi)=\xi^{2}\widehat{\mu}(\xi)$ of (\ref{nw}) shows the dispersive nature of the solutions. So  the convolution operator $B$  is responsible for dispersion of  wave solutions to (\ref{nw}).

It is worth mentioning that the class (\ref{nw}) of nonlocal nonlinear wave equations covers various models of dispersive wave propagation. A typical example for the measure $\mu$ is $\mu=\delta+\beta$ where $\delta$ is the Dirac measure and $\beta$ is an even $L^{1} (\mathbb{R})$ function such that $0<c_{1}\leq 1+\widehat{\beta}(\xi)$ for some $c_{1}$.  In this case (\ref{nw}) takes the form
\begin{equation}
    u_{tt}=u_{xx}+\beta \ast u_{xx}+g(u)_{xx}  \label{nwplus}
\end{equation}
with the usual convolution operator
\begin{displaymath}
    (\beta \ast u)(x)=\int_{\mathbb{R}} \beta(x-y)u(y)~dy.
\end{displaymath}
 We also note that the class (\ref{nw}) is closely related to the  nonlinearly regularized wave equation
 \begin{equation}
    u_{tt}=\beta \ast \big(u_{xx}+g(u)_{xx}\big)  \label{nwnonlin}
\end{equation}
 considered in  \cite{Duruk2010,Erbay2021} in which the convolution operator  acts on both the linear  and  nonlinear terms.  We stress that the members of the linearly and nonlinearly regularized classes of nonlinear wave equations are totally different from each other. For instance, if the kernel function $\beta$ in (\ref{nwplus}) is taken as the exponential kernel $\beta(x)=\frac{1}{2}e^{-\vert x \vert}$ in which $B=I+(1-D_{x}^{2})^{-1}$, (\ref{nw}) reduces to
\begin{equation}
    u_{tt}- 2u_{xx}-u_{xxtt}+u_{xxxx}=g(u)_{xx}-g(u)_{xxxx}.  \label{ker-exp}
\end{equation}
However, if we take  the same exponential kernel in the nonlinearly regularized class (\ref{nwnonlin}) considered in \cite{Duruk2010,Erbay2021}, we get the improved Boussinesq equation $u_{tt}-u_{xx}-u_{xxtt}=g(u)_{xx}$. To get an another member of the class (\ref{nwplus}), we now consider the triangular kernel defined by $\beta(x)=\frac{1}{h}\big(1-\frac{\vert x\vert}{h}\big)$ for $\vert x\vert \leq h$ and  $\beta(x)=0$ for $\vert x\vert > h$ where $h$ is a positive constant. If the kernel $\beta$ is taken as the triangular kernel, (\ref{nwplus}) reduces to the differential-difference equation
\begin{equation}
    u_{tt}=u_{xx}+\Delta_{h} u+g(u)_{xx},
\end{equation}
 where $\Delta_{h}$ is the second-order central difference operator defined by $(\Delta_{h} u)(x)=\big(u(x+h)-2u(x)+u(x-h)\big)/h^{2}$. However, if $\beta$ in the nonlinearly regularized wave equation (\ref{nwnonlin}) is taken as the triangular kernel, we get the differential-difference equation $u_{tt}=\Delta_{h} (u+g(u))$  in \cite{Duruk2010,Erbay2021}. When this  last equation is written in terms of $w(x,t)$ defined by $u(x,t)=\big(w(x+h,t)-w(x,t)\big)/h$, it becomes the famous Fermi-Pasta-Ulam-Tsingou equation that describes longitudinal vibrations of an infinite chain of identical particles \cite{Fermi1955}. We refer the reader to Section 7 of \cite{Erbay2021} for more details.

In the present work we are concerned with two issues; the local well-posedness of (\ref{nw}) and the convergence of the solutions of (\ref{nw}) to the solutions of (\ref{classical})  as $B$ approaches the identity operator $I$. The second issue is about the vanishing dispersion (vanishing nonlocality) limit of strong solutions to the Cauchy problem for (\ref{nw}).  This issue is inspired by the convergence result  in  \cite{Erbay2021} where the convergence   from a class of nonlinearly regularized wave equations to the classical elasticity equation was established. We extend here the approach developed in \cite{Erbay2021} to the linearly regularized wave equation (\ref{nw}). For this aim we first parameterize (\ref{nw}) by replacing the operator $B$ by the family of convolution operators  $B_{\varepsilon}$.   We then consider two slightly different approaches for the vanishing dispersion limit. In the first approach   the operators are given by $B_{\varepsilon}=(\delta +\varepsilon \beta)\ast$ with a small parameter $\varepsilon$, a fixed $L^{1}$ function $\beta$ and the Dirac measure $\delta$. Obviously, as $\varepsilon \rightarrow 0$, $B_{\varepsilon}$ converges to the identity operator $I$  and we get (\ref{classical}). In the second approach  the operators are  $B_{\varepsilon}=\mu_{\varepsilon}\ast$ with $\mu_{\varepsilon}(x)=\frac{1}{\sqrt{\varepsilon}}\mu(\frac{x}{\sqrt{\varepsilon}})$. When $\int_{\mathbb{R}} \mu~dx=1$, $B_{\varepsilon}$ converges to $I$ as $\varepsilon \rightarrow 0$ and we get again (\ref{classical}). We note that in the second approach we can get the parameterized form of (\ref{nw}) using the  transformation $(x,t,u)\rightarrow (x/\sqrt{\varepsilon}, t/\sqrt{\varepsilon}, u)$ in (\ref{nw}). So the second approach corresponds to the long-wave limit of (\ref{nw}). In both approaches we show that the difference between the corresponding solutions of (\ref{nw}) and  (\ref{classical}) with the same initial data remains small  if the dispersive effect is sufficiently small.

The plan of this paper is as follows. In Section \ref{Sec:2} we prove the local well-posedness of the Cauchy problem for the linear system associated with (\ref{nw}).  In Section \ref{Sec:3} we establish the  local well-posedness of the Cauchy problem for (\ref{nw}).  In Section \ref{Sec:4} we show that, in the vanishing dispersion  limit,   solutions of the Cauchy problem for (\ref{nw}) converge to  the corresponding solution of (\ref{classical}).

Throughout this paper we will follow  the standard notation for function spaces and norms. The Fourier transform $\widehat{u}$ of $u$ is defined by $\widehat{u}(\xi)=\int_{\mathbb{R}}u(x)e^{-i\xi x}dx$. The norm of $u$ in the Lebesgue space $L^{p}(\mathbb{R})$ ($1\leq p\leq \infty $) is represented by $\Vert \cdot\Vert_{L^{p}}$. The notation $H^{s}=H^{s}(\mathbb{R})$ (for $s\in\mathbb{R}$) is used to denote the $L^{2}$-based Sobolev space of order $s$ on $\mathbb{R}$, with the norm $\Vert u\Vert _{H^{s}}=\big( \int_{\mathbb{R}}(1+\xi ^{2})^{s}|\widehat{u}(\xi )|^{2}d\xi \big)^{1/2}$.  $C$ is a generic positive constant. Partial differentiations are denoted by $D_{x}$ etc. For convenience  we also introduce the notations $X^{s}$ and $Y^{s}$ to refer the spaces  defined by
\begin{equation}
    X^{s}=C\big([0, T], H^{s}\big),~~~~Y^{s}=C\big([0, T], H^{s}\big) \cap C^{1}\big([0, T], H^{s-1}\big), \label{XYuzay}
\end{equation}
for fixed $T>0$.  The associated norms of $X^{s}$ and $Y^{s}$ are given by
\begin{equation}
    \Vert u\Vert_{X^{s}}=\sup\limits_{0\leq t\leq T}\Vert u(t)\Vert _{H^{s}}, ~~~~
    \Vert u\Vert _{Y^{s}}
        =\sup\limits_{0\leq t\leq T}\Vert u(t)\Vert_{H^{s}}+\sup\limits_{0\leq t\leq T}\Vert u_{t}(t)\Vert_{H^{s-1}}, \label{XYnorm}
\end{equation}
respectively. Finally, the notations $\Lambda^{s}=(1-D_{x}^{2})^{s/2}$ and $[\Lambda^{s},f] g=\Lambda^{s}(f g)-f\Lambda^{s}g$ are used throughout the remainder of this study.

\section{Local well-posedness for the linear system}
\label{Sec:2}

 To prove our estimates below and in the next sections, we will need the following commutator estimates given in \cite{Kato1988} and, for more general operators $\sigma(D_{x})$, in \cite{Lannes2013} respectively:
\begin{lemma}\label{lem2.1}
    Let  $s\geq 0$. Then for all $f, g$ satisfying $f\in H^{s}$, $D_{x}f\in L^{\infty}$, $g\in H^{s-1}\cap L^{\infty}$,
    \begin{equation*}
        \big\Vert [ \Lambda^{s},f] g\big\Vert_{L^{2}}
        \leq C \big(\Vert D_{x}f\Vert_{L^{\infty}}\Vert g\Vert_{H^{s-1}}+\Vert f\Vert _{H^{s}}\Vert g\Vert_{L^{\infty }}\big).
    \end{equation*}%
    In particular, when $s>3/2$, due to the Sobolev embeddings $H^{s-1}\subset L^{\infty}$, for all $f, g\in H^{s}$
    \begin{equation*}
        \big\Vert [ \Lambda^{s},f] D_{x}g\big\Vert_{L^{2}}\leq C ~\Vert f\Vert_{H^{s}}~\Vert g\Vert_{H^{s}}.
    \end{equation*}%
\end{lemma}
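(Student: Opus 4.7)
The plan is as follows. The first commutator estimate is the classical Kato commutator estimate, and I would follow the Fourier/Littlewood--Paley approach of \cite{Kato1988}.

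First I would pass to the frequency side. Writing $\langle \xi\rangle = (1+\xi^{2})^{1/2}$, one has
\begin{equation*}
    \widehat{[\Lambda^{s},f]g}(\xi) = \int_{\mathbb{R}} \bigl(\langle\xi\rangle^{s} - \langle\eta\rangle^{s}\bigr)\,\widehat{f}(\xi-\eta)\,\widehat{g}(\eta)\,d\eta,
\end{equation*}
so the commutator is an integral operator whose symbol is the difference $\langle\xi\rangle^{s} - \langle\eta\rangle^{s}$. I would then dyadically decompose $f = \sum_{j} f_{j}$ and $g = \sum_{k} g_{k}$ via a Littlewood--Paley partition of unity and split the resulting double sum into the low--high regime $j \ll k$, the high--low regime $j \gg k$, and the diagonal regime $j \sim k$.

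In the low--high regime $j \ll k$, a first-order Taylor expansion gives $\langle\xi\rangle^{s} - \langle\eta\rangle^{s} = (\xi-\eta)\,\sigma(\xi,\eta)$ with a symbol behaving like $\langle\eta\rangle^{s-1}$ on the relevant frequency support; taking an $L^{2}$ norm, using Plancherel and the Coifman--Meyer/Young type bound, the derivative on $f$ becomes $D_{x}f$ measured in $L^{\infty}$, while $g$ carries $s-1$ derivatives in $L^{2}$, yielding the contribution $\|D_{x}f\|_{L^{\infty}}\|g\|_{H^{s-1}}$. In the high--low and diagonal regimes the derivative loss must be absorbed into $f$, so one estimates the dyadic piece of $g$ in $L^{\infty}$ and the corresponding piece of $f$ in $L^{2}$ with the full weight $\langle\xi\rangle^{s}$, giving $\|f\|_{H^{s}}\|g\|_{L^{\infty}}$. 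Summing over $j,k$ using Bernstein's inequality and the square function characterization of Sobolev norms produces the stated bound.

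The second estimate follows from the first by a direct substitution. For $s > 3/2$ the Sobolev embedding $H^{s-1}\subset L^{\infty}$ yields
\begin{equation*}
    \|D_{x}f\|_{L^{\infty}} \leq C\|D_{x}f\|_{H^{s-1}} \leq C\|f\|_{H^{s}}, \qquad \|D_{x}g\|_{L^{\infty}} \leq C\|g\|_{H^{s}}.
\end{equation*}
Replacing $g$ by $D_{x}g$ in the first estimate and using $\|D_{x}g\|_{H^{s-1}} \leq \|g\|_{H^{s}}$, both terms on the right-hand side collapse to $C\|f\|_{H^{s}}\|g\|_{H^{s}}$.

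The main obstacle is the diagonal regime $j \sim k$: the naive mean value estimate $|\langle\xi\rangle^{s}-\langle\eta\rangle^{s}| \lesssim |\xi-\eta|(\langle\xi\rangle^{s-1} + \langle\eta\rangle^{s-1})$ is too wasteful there, because neither $f$ nor $g$ can afford to carry a derivative. One must instead exploit the symmetry of the symbol to redistribute the weight between $f$ and $g$, which is precisely what the paraproduct decomposition organizes systematically.
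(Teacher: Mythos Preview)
The paper does not prove Lemma~\ref{lem2.1}: it is stated as a known result with a citation to Kato--Ponce \cite{Kato1988}, and no argument is given. Your plan to reproduce the Littlewood--Paley/paraproduct proof from that reference is therefore not a different route but rather a fleshing-out of what the paper leaves as a black box, and it is the correct strategy.

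One small correction to your sketch: the diagonal regime $j\sim k$ is not really the obstacle you make it out to be. In the high--high piece the output frequency of $f_{j}g_{k}$ is at most $\sim 2^{j}$, so $\Lambda^{s}$ acting on it is harmless, and one estimates $\Lambda^{s}(f_{j}g_{k})$ and $f_{j}\Lambda^{s}g_{k}$ separately without exploiting any cancellation; both are controlled by $\Vert f\Vert_{H^{s}}\Vert g\Vert_{L^{\infty}}$ after summation. The genuine gain from the commutator structure occurs only in the low--high piece $[\Lambda^{s},S_{j-2}f]\Delta_{j}g$, where the mean-value expansion of $\langle\xi\rangle^{s}-\langle\eta\rangle^{s}$ converts one factor of $\xi-\eta$ into $D_{x}f$ and leaves $\langle\eta\rangle^{s-1}$ on $g$. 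Your final paragraph suggests the opposite emphasis, which would mislead a reader. Otherwise the outline is sound, and the derivation of the second inequality from the first via Sobolev embedding is exactly right.
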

\begin{lemma}\label{lem2.2}
    Let  $t_{0}>1/2$, $r\geq 0$ and $\sigma \in S^{r}$. If $0\leq r\leq t_{0}+1$ and $f\in H^{t_{0}+1}$, then for all $g\in H^{r-1}$, one has
    \begin{equation}
        \big\Vert [ \sigma(D_{x}), f] g\big\Vert_{L^{2}}
        \leq C~ \Vert f_{x}\Vert_{H^{t_{0}}}~\Vert g\Vert_{H^{r-1}}.  \label{LAN-est}
    \end{equation}%
\end{lemma}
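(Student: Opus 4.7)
The plan is to express the commutator in Fourier variables and exploit the smoothness of the symbol $\sigma$ to transfer one derivative onto $f$. Writing
\begin{equation*}
\widehat{[\sigma(D_x),f]g}(\xi) = \frac{1}{2\pi}\int_{\mathbb{R}} \bigl[\sigma(\xi)-\sigma(\eta)\bigr]\,\widehat{f}(\xi-\eta)\,\widehat{g}(\eta)\,d\eta,
\end{equation*}
one sees that the kernel vanishes on the diagonal $\xi=\eta$. The fundamental theorem of calculus along the segment joining $\eta$ and $\xi$, together with the symbol estimate $|\sigma'(\zeta)|\leq C\langle\zeta\rangle^{r-1}$ characterizing $\sigma\in S^r$, gives
\begin{equation*}
|\sigma(\xi)-\sigma(\eta)| \leq C\,|\xi-\eta|\,\bigl(\langle\xi\rangle^{r-1}+\langle\eta\rangle^{r-1}\bigr).
\end{equation*}
The factor $|\xi-\eta|$ accounts for one derivative falling on $f$ (producing $f_x$), while the remaining weight of order $r-1$ will be balanced against $g\in H^{r-1}$.

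To convert this symbolic bound into the claimed $L^{2}$ estimate, I would use a Littlewood--Paley decomposition $f=\sum_{j}f_{j}$, $g=\sum_{k}g_{k}$ and split the commutator according to the relative sizes of $|\xi-\eta|$ and $|\eta|$, equivalently the dyadic levels $j$ and $k$. In the low-high regime $j\ll k$, the weights $\langle\xi\rangle^{r-1}$ and $\langle\eta\rangle^{r-1}$ are comparable, and $L^{2}$ boundedness reduces to a paraproduct estimate in which $\Vert f_{x}\Vert_{L^{\infty}}$ (controlled via the Sobolev embedding $H^{t_{0}}\subset L^{\infty}$, valid since $t_{0}>1/2$) is paired with $\Vert g\Vert_{H^{r-1}}$. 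The high-low regime $j\gg k$ is treated analogously with the roles of $f$ and $g$ exchanged, and the diagonal regime $j\sim k$ is handled as a remainder piece. Summing the dyadic contributions and invoking the constraint $r\leq t_{0}+1$ to close the geometric series on the high-frequency side of $f$ yields the desired inequality.

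The main obstacle is the bookkeeping in the regime where both $f$ and $g$ have comparable high frequencies, since the symbol cannot be moved cleanly onto a single factor there. This is most efficiently handled by Bony's paradifferential decomposition $fg=T_{f}g+T_{g}f+R(f,g)$: the paraproducts $T_{f}g$ and $T_{g}f$ produce the two principal contributions via the symbolic calculus for $\sigma(D_{x})$, while the remainder $R(f,g)$ is controlled through the $L^{\infty}$ bound on $f_{x}$. An equivalent route bypasses Littlewood--Paley entirely by applying a bilinear multiplier theorem of Coifman--Meyer type to the symbol $m(\xi,\eta)=[\sigma(\xi)-\sigma(\eta)]/(\xi-\eta)$ after verifying the relevant H\"{o}rmander-type conditions. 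In either approach, the sharp inequality $r\leq t_{0}+1$ reflects the precise balance between the single derivative gained from the commutator structure and the $t_{0}+1$ derivatives available on $f$ through $\Vert f_{x}\Vert_{H^{t_{0}}}$.
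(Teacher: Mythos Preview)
The paper does not prove Lemma~\ref{lem2.2} at all: it is quoted verbatim as a known commutator estimate, with attribution to Lannes' monograph \cite{Lannes2013} (see the sentence preceding Lemmas~\ref{lem2.1} and~\ref{lem2.2}). There is therefore no ``paper's own proof'' to compare against.

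That said, your sketch is essentially the standard route taken in \cite{Lannes2013}: express the commutator on the Fourier side, use the mean-value inequality on the symbol to extract one factor of $|\xi-\eta|$ (i.e.\ one derivative on $f$), and then control the resulting bilinear operator via a paraproduct decomposition, with the embedding $H^{t_0}\subset L^\infty$ for $t_0>1/2$ supplying the needed $L^\infty$ bound on $f_x$. Two small cautions. First, the pointwise bound $|\sigma(\xi)-\sigma(\eta)|\leq C\,|\xi-\eta|\bigl(\langle\xi\rangle^{r-1}+\langle\eta\rangle^{r-1}\bigr)$ is not literally correct for $0\le r<1$ when $\xi$ and $\eta$ have opposite signs (the intermediate point $\zeta$ may have $\langle\zeta\rangle^{r-1}$ larger than either endpoint); in practice one avoids this by working block-by-block in a Littlewood--Paley decomposition, where $\xi$ and $\eta$ are localized to comparable dyadic shells and the issue disappears. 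Second, what you have written is a plan rather than a proof: the ``bookkeeping'' you allude to in the diagonal and high--low regimes is where all the work lies, and one must actually carry out the dyadic summation and verify that the constraint $r\le t_0+1$ is exactly what makes the high-$j$ sum converge. If you want to present this as a self-contained argument rather than a citation, those details need to be filled in; otherwise, simply citing \cite{Lannes2013} as the paper does is appropriate.
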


The Cauchy problem
\begin{eqnarray}
    && u_{t}=v_{x},\text{ \ \ \ \ \ \ \ \ \ \ \ \ \ \ \ \ \ \ \ \ \ }u(x,0)=u_{0}(x),~~\label{systema} \\
    && v_{t}=Bu_{x}+g^{\prime }(u)u_{x},\text{ \ \ \ \ \ }v(x,0)=v_{0}(x)  \label{systemb}
\end{eqnarray}%
is equivalent to the Cauchy problem defined by (\ref{nw})  and the initial data $u(x,0)=u_{0}(x)$,  $u_{t}(x,0)=(v_{0}(x))_{x}$. We note that if $B$ is taken as the identity operator, (\ref{systema})-(\ref{systemb}) reduces to the Cauchy problem for the  well-known $p$-system:  $u_{t}=v_{x}$,  $v_{t}=u_{x}+(g(u))_{x}$. The $p$-system appears in a number of physical applications, such as to describe the one-dimensional motion of elastic solids or the isentropic gas dynamics  in Lagrangian coordinates (for more on the $p$-system see, for instance, \cite{Lions1994,Young2002}).

We now consider the linear problem
\begin{eqnarray}
    && u_{t}=v_{x},\text{ \ \ \ \ \ \ \ \ \ \ \ \ \ \ \ \ \ \ }u(x,0)=u_{0}(x),~~ \label{linsysa} \\
    && v_{t}=Bu_{x}+w u_{x},\text{ \ \ \ \ \ \ }v(x,0)=v_{0}(x),  \label{linsysb}
\end{eqnarray}
where  $w=w(x,t)$ is a  given fixed function satisfying the following condition
\begin{equation}
    0<d_{1}\leq c_{1}+w(x, t)\leq d_{2}  ~~\text{for all}~(x,t)\in \mathbb{R}\times [0,T]     \label{hyperbolic}
\end{equation}%
for some constants $d_{1}$,  $d_{2}$ and fixed $T>0$ and the lower bound $c_{1}$ for $\widehat{\mu}$ given in (\ref{kernelbound}). Note that the above inequality for $w$ is satisfied whenever $\Vert w(t)\Vert_{L^{\infty}}$ is small enough. Alternatively, another possibility is the case where $w$ is bounded and nonnegative. We also note that  the hyperbolicity of the linearized system (\ref{linsysa})-(\ref{linsysb})  is guaranteed by the conditions (\ref{kernelbound}) and (\ref{hyperbolic}). For the linearized system (\ref{linsysa})-(\ref{linsysb}) we define the $H^{s}$   "energy" functional
\begin{equation}
    \mathcal{E}_{s}^{2}(t)
        =\frac{1}{2}\int_{\mathbb{R}} \bigg( \big(B^{1/2}\Lambda^{s}u(x,t)\big)^{2}+\big(\Lambda^{s}v(x,t)\big)^{2}+ w(x,t)\big(\Lambda^{s}u(x,t)\big)^{2}\bigg) ~dx.  \label{energy}
\end{equation}%
By (\ref{hyperbolic}), $\mathcal{E}_{s}^{2}(t)$ will be equivalent to the norm $\Vert u(t)\Vert_{H^{s}}^{2}+\Vert v(t)\Vert _{H^{s}}^{2}$.   We now prove the existence of the solution to (\ref{linsysa})-(\ref{linsysb}) for both a fixed $w\in Y^{s}$ satisfying (\ref{hyperbolic}) and initial values $u_{0},v_{0}\in H^{s}$. For the existence proof of the linearized system, we follow Taylor's hyperbolic approach \cite{Taylor2011}. In that respect we  consider  Friederichs mollifier $J^{h}$ given by
\begin{equation*}
        J^{h}\varphi(x)=\frac{1}{h}\int_{\mathbb{R}} \eta \big(\frac{x-y}{h}\big)\varphi(y)~dy
\end{equation*}
with some nonnegative $\eta \in C_{0}^{\infty }(\mathbb{R)}$ and $\int_{\mathbb{R}} \eta(x) dx=1$. The following estimate \cite {Mats2017}
\begin{equation}
        \big\Vert J^{h_{1}}\varphi-J^{h_{2}}\varphi\big\Vert_{H^{s-1}}
            \leq C ~\vert h_{1}-h_{2}\vert~ \Vert \varphi\Vert_{H^{s}}     \label{mol-est}
\end{equation}
will be used throughout the rest of the study.  The mollified system is then
\begin{eqnarray}
        u_{t} &=&J^{h}v_{x},~\text{\ \ \ \ \ \ \ \ \ \ \ \ \ \ \ \ \ \ \ \ \ }u(x,0)=u_{0}(x),  \label{molA} \\
        v_{t} &=&BJ^{h}u_{x}+wJ^{h}u_{x},\text{ \ \ \ \ \ }v(x,0)=v_{0}(x).  \label{molB}
\end{eqnarray}
Being an $H^{s}\times H^{s}$-valued linear ODE system, (\ref{molA})-(\ref{molB}) has unique solution $u_{h},v_{h}\in X^{s}$.
\begin{lemma}\label{lem2.3}
    Let $s>3/2$, $u_{0},v_{0}\in H^{s}$ and $w\in Y^{s}$. Suppose that $(u_{h},v_{h})$ satisfy (\ref{molA})-(\ref{molB}) on $[0,T]$. Then the energy $\mathcal{E}_{s}^{2}=\mathcal{E}_{s}^{2}(u_{h},v_{h})$ satisfies the estimate
    \begin{equation}
        \mathcal{E}_{s}^{2}(t)\leq \mathcal{E}_{s}^{2}(0)~e^{C t\Vert w \Vert_{Y^{s}}}  \label{linestimate}
    \end{equation}
    for $t\in [0,T]$.
\end{lemma}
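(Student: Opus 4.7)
The approach is the standard hyperbolic energy method: differentiate $\mathcal{E}_s^2$ in time, cancel the principal terms, estimate the remainder, and apply Gronwall. Set $U:=\Lambda^s u_h$, $V:=\Lambda^s v_h$ and apply $\Lambda^s$ to (\ref{molA})-(\ref{molB}); since $J^h$, $B$, $\Lambda^s$, $\partial_x$ are all Fourier multipliers (apart from $\Lambda^s$ meeting multiplication by $w$), they commute pairwise and one obtains
\begin{equation*}
U_t = J^h V_x, \qquad V_t = B J^h U_x + w J^h U_x + [\Lambda^s,w] J^h u_x.
\end{equation*}
Differentiating $\mathcal{E}_s^2$ in time (using self-adjointness of $B$) and substituting, the contributions $\int BU \cdot J^h V_x\,dx$ and $\int V \cdot B J^h U_x\,dx$ cancel after one integration by parts, leaving
\begin{equation*}
\tfrac{d}{dt}\mathcal{E}_s^2 = X + \int V\,[\Lambda^s,w] J^h u_x\,dx + \tfrac12\int w_t U^2\,dx,
\end{equation*}
where $X := \int wV\, J^h U_x\,dx + \int wU\, J^h V_x\,dx$.

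To handle the cross term $X$, I commute $w$ past $J^h$ in the second integral via the identity $J^h(wU) = wJ^h U + [J^h,w]U$, so that the two ``symmetric'' pieces combine into a perfect $x$-derivative $\int w(VJ^h U)_x\,dx = -\int w_x V\,J^h U\,dx$, with a commutator remainder. An integration by parts together with the identity $\partial_x[J^h,w] = [J^h,w]\partial_x + [J^h,w_x]$ then yields
\begin{equation*}
X = -\int w_x V\, J^h U\,dx - \int V\,[J^h,w] U_x\,dx - \int V\,[J^h,w_x] U\,dx.
\end{equation*}

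It remains to estimate each term uniformly in $h$. Lemma \ref{lem2.1} gives $\|[\Lambda^s,w] J^h u_x\|_{L^2}\leq C\|w\|_{H^s}\|u_h\|_{H^s}$; the elementary mollifier bound $\|[J^h,f]g\|_{L^2}\leq 2\|f\|_{L^\infty}\|g\|_{L^2}$ handles $[J^h,w_x]U$; and the classical Friedrichs commutator lemma supplies the (uniform in $h$) estimate $\|[J^h,w]\partial_x U\|_{L^2}\leq C\|w_x\|_{L^\infty}\|U\|_{L^2}$. Using the Sobolev embedding $H^{s-1}\subset L^\infty$ (for $s>3/2$), both $\|w_x\|_{L^\infty}$ and $\|w_t\|_{L^\infty}$ are controlled by $\|w\|_{Y^s}$. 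Finally, (\ref{kernelbound}) together with (\ref{hyperbolic}) makes $\mathcal{E}_s^2$ equivalent to $\|u_h\|_{H^s}^2 + \|v_h\|_{H^s}^2$, so that all remainders are bounded by $C\|w\|_{Y^s}\mathcal{E}_s^2$. Gronwall's inequality then delivers (\ref{linestimate}).

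The main technical obstacle is the uniform-in-$h$ control of the mollifier commutators $[J^h,w]\partial_x$: the symbol of $J^h$ does not lie in any symbol class $S^r$ with $h$-independent seminorms, so Lemma \ref{lem2.2} does not apply directly, and one must invoke the sharper Friedrichs commutator lemma instead. Everything else is routine commutator algebra and Sobolev bookkeeping.
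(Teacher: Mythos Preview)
Your argument is correct and follows the same hyperbolic energy strategy as the paper, with only a cosmetic difference in how the cross term $X$ is reorganized: the paper integrates $\int wU\,J^hV_x\,dx$ by parts and then uses self-adjointness of $J^h$ to combine it with the other term, arriving directly at
\[
-\int w_x\, U\, J^hV\,dx+\int V\Big(\Lambda^s(wJ^hu_x)-J^h(w\Lambda^su_x)\Big)\,dx
=\;-\int w_x\, U\, J^hV\,dx+\int V\Big([\Lambda^s,w]J^hu_x-[J^h,w]\Lambda^su_x\Big)\,dx,
\]
i.e.\ two commutators rather than your three (your extra piece $[J^h,w_x]U$ is exactly the discrepancy between the two groupings). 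Your concluding remark, however, is mistaken: the symbol $\widehat\eta(h\xi)$ of $J^h$ \emph{does} belong to $S^0$ with seminorms bounded uniformly in $h\in(0,1]$, since $|\partial_\xi^k\widehat\eta(h\xi)|=h^k|\widehat\eta^{(k)}(h\xi)|\le C_k(1+|\xi|)^{-k}$ follows from the Schwartz decay of $\widehat\eta$ together with the elementary inequality $h(1+|\xi|)\le 1+h|\xi|$. The paper therefore applies Lemma~\ref{lem2.2} with $r=0$ directly to estimate $[J^h,w]\Lambda^su_x$; your appeal to the Friedrichs commutator lemma is a perfectly valid equivalent, but not a necessary detour.
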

\begin{proof}
Suppressing both $h$ and $t$;
\begin{eqnarray}\label{xx}
    \frac{d}{dt}\mathcal{E}_{s}^{2}(t)
            &=&\int_\mathbb{R} \bigg( \big(B^{1/2}\Lambda^{s}u\big)\big(B^{1/2}\Lambda^{s}u_{t}\big)
                    +\frac{1}{2} w_{t}\big(\Lambda^{s}u\big)^{2}+w\big(\Lambda^{s}u\big)\big(\Lambda^{s}u_{t}\big)
                    + \big(\Lambda^{s}v\big)\big(\Lambda^{s}v_{t}\big) \bigg) ~dx \notag \\
            &=&\frac{1}{2}\int_\mathbb{R} w_{t}\big( \Lambda ^{s}u\big)^{2}~dx
                +\int_\mathbb{R} \big(B^{1/2}\Lambda^{s}u\big)\big(B^{1/2}\Lambda^{s}J^{h}v_{x}\big)~dx
                +\int_\mathbb{R} \big(\Lambda^{s}v\big)\big(\Lambda^{s}BJ^{h}u_{x}\big)~dx
                 \notag \\
            &&  +\int_\mathbb{R} w\big(\Lambda^{s}u\big)\big(\Lambda^{s}J^{h}v_{x}\big)~dx
                +\int_\mathbb{R} \big(\Lambda^{s}v\big)\big(\Lambda^{s}(wJ^{h}u_{x})\big)~dx,
\end{eqnarray}
where we have used (\ref{molA}) and (\ref{molB}). Since $B^{1/2}$, $\Lambda^{s}$ and $J^{h}$ are self-adjoint and commute with each other, we have
\begin{equation}
    \int_\mathbb{R} \bigg(\big(B^{1/2}\Lambda^{s}u\big)\big(B^{1/2}\Lambda^{s}J^{h}v_{x}\big)
            +\big(\Lambda^{s}v\big)\big(\Lambda^{s}J^{h}Bu_{x}\big)\bigg)~dx
    =\int_\mathbb{R} \frac{\partial}{\partial x}\big(J^{h}B^{1/2}\Lambda^{s}u\big)\big(B^{1/2}\Lambda^{s}v\big)~dx=0. \label{zeroo}
\end{equation}
If we use integration by parts for the last two integrals in (\ref{xx}),  it becomes
\begin{equation}\label{xxyeni}
    \frac{d}{dt}\mathcal{E}_{s}^{2}(t)
            =\frac{1}{2}\int_\mathbb{R} w_{t}\big( \Lambda ^{s}u\big)^{2}~dx
             -\int_\mathbb{R} w_{x}\big(\Lambda^{s}u\big)\big(\Lambda^{s}J^{h}v\big)~dx
  +\int_\mathbb{R}\big(\Lambda^{s}v\big)\bigg(\Lambda^{s}\big(wJ^{h}u_{x}\big)-J^{h}\big(w\Lambda^{s}u_{x}\big)\bigg)~dx,
\end{equation}
where we have used (\ref{zeroo}). Regarding the first two integrals on the right-hand side of  (\ref{xxyeni}) we have the  following two inequalities respectively:
\begin{eqnarray}
  &&   \int_\mathbb{R} w_{t}\big(\Lambda^{s}u\big)^{2}~dx\leq \Vert w_{t} \Vert_{L^{\infty}}~ \Vert u\Vert_{H^{s}}, \label{yyfirst} \\
  && \int_\mathbb{R} w_{x}\big(\Lambda^{s}u\big)\big(\Lambda^{s}J^{h}v\big)~dx \leq \Vert w_{x} \Vert_{L^{\infty}}~ \Vert u\Vert_{H^{s}}~\Vert v\Vert_{H^{s}}. \label{yysecond}
\end{eqnarray}
To get a similar estimate for the last integral in (\ref{xxyeni}) we make use of the commutator estimates in Lemmas \ref{lem2.1} and \ref{lem2.2}. Using $[\Lambda^{s},f]g=\Lambda^{s}(fg)-f\Lambda^{s}g$, a part of the integrand in the last integral can be written as
\begin{equation}
    \Lambda^{s}\big(wJ^{h}u_{x}\big)-J^{h}\big(w\Lambda^{s}u_{x}\big)=[\Lambda^{s},w]J^{h}u_{x}-[J^{h},w]\Lambda^{s}u_{x}.
    \label{zzzz}
\end{equation}
Regarding the first term on the right-hand side of (\ref{zzzz}), by Lemma \ref{lem2.1} we have
  \begin{equation*}
        \big\Vert [ \Lambda^{s},w] J^{h}u_{x}\big\Vert_{L^{2}}
        \leq C \big(\Vert w_{x}\Vert_{L^{\infty}}~\Vert J^{h}u_{x}\Vert_{H^{s-1}}+\Vert w\Vert_{H^{s}}~\Vert J^{h}u_{x}\Vert_{L^{\infty }}\big)
    \end{equation*}%
or, when $s>3/2$,
    \begin{equation}
        \big\Vert [ \Lambda^{s},w] J^{h}u_{x}\big\Vert_{L^{2}}
        \leq C ~\Vert w\Vert_{H^{s}}~\Vert u\Vert_{H^{s}}.  \label{yythird}
    \end{equation}%
Now, regarding the second term on the right-hand side of (\ref{zzzz}) we will use  the estimate (\ref{LAN-est}) with $r=0$.  From (\ref{LAN-est}) we have
\begin{equation}
        \big\Vert [ J^{h}, w] \Lambda^{s}u_{x}\big\Vert_{L^{2}}
        \leq C~ \Vert w_{x}\Vert_{H^{t_{0}}}~\Vert \Lambda^{s}u_{x}\Vert_{H^{-1}}
        \leq C~ \Vert w\Vert_{H^{t_{0}+1}}~\Vert \Lambda^{s}u\Vert_{L^{2}}.
    \end{equation}%
If we take $s=t_{0}+1>3/2$, we get
\begin{equation}
        \big\Vert [ J^{h}, w] \Lambda^{s}u_{x}\big\Vert_{L^{2}}
                \leq C~ \Vert w\Vert_{H^{s}}~\Vert u\Vert_{H^{s}}. \label{yyfourth}
    \end{equation}%
So, using the results obtained in (\ref{yyfirst}), (\ref{yysecond}), (\ref{yythird}) and (\ref{yyfourth}), for $s>3/2$ we get
\begin{equation}
    \frac{d}{dt}\mathcal{E}_{s}^{2}\leq C \Vert w\Vert_{Y^{s}}\mathcal{E}_{s}^{2}, \label{linest}
\end{equation}
from (\ref{xxyeni}).  Gronwall's inequality yields the result  (\ref{linestimate}).
\end{proof}
In the remaining parts of this study we require some extensions of Lemma \ref{lem2.3}. For this purpose we now  state the following two remarks regarding the systems associated with (\ref{molA})-(\ref{molB})
\begin{remark}\label{rem2.4}
    For the nonhomogeneous system
    \begin{eqnarray}
        u_{t} &=&J^{h}v_{x}+F_{1},~\text{\ \ \ \ \ \ \ \ \ \ \ \ \ \ \ \ \ \ \ }u(x,0)=u_{0}(x),  \label{nonA} \\
        v_{t} &=&BJ^{h}u_{x}+wJ^{h}u_{x}+F_{2},\text{ \ \ \ \ \ }v(x,0)=v_{0}(x),  \label{nonB}
    \end{eqnarray}
    the estimate (\ref{linest}) of Lemma \ref{lem2.3} becomes
    \begin{equation}
    \frac{d}{dt}\mathcal{E}_{s}^{2}\leq C \Vert w\Vert_{Y^{s}}\mathcal{E}_{s}^{2}+C \big(\Vert F_{1}\Vert_{H^{s}}+\Vert F_{2}\Vert_{H^{s}}\big)\mathcal{E}_{s}  \label{nonest}
\end{equation}
with $s>3/2$.
\end{remark}
\begin{remark}\label{rem2.5}
    The conclusions of  Lemma \ref{lem2.3} and Remark \ref{rem2.4} also hold when $J^{h}$ is replaced by the identity operator $I$.
\end{remark}

We now proceed with the proof of the local well-posedness of (\ref{linsysa})-(\ref{linsysb}). Since  $\Vert u_{h}(t)\Vert^{2}_{H^{s}}+\Vert v_{h}(t)\Vert^{2}_{H^{s}}\approx \mathcal{E}_{s}^{2}(t)$ for solution $(u_{h},v_{h})$ of  (\ref{molA})-(\ref{molB}), Lemma \ref{lem2.3} shows that $u_{h}$ and $v_{h}$ are bounded in $X^{s}$ and thus there is a subsequence $(u_{h_{k}},v_{h_{k}})$ weakly converging to some $\bar{u},\bar{v}\in X^{s}$  as $h_{k} \rightarrow 0$. To show the strong convergence we introduce the differences $p=u_{h_{k}}-u_{h_{m}}$ and $q=v_{h_{k}}-v_{h_{m}}$. Then, from (\ref{molA})-(\ref{molB}) we get
\begin{eqnarray}
        p_{t} &=&J^{h_{k}}q_{x}+F_{1},~\text{\ \ \ \ \ \ \ \ \ \ \ \ \ \ \ \ \ \ \ \ \ }p(x,0)=0,  \label{nonpqA} \\
        q_{t} &=&BJ^{h_{k}}p_{x}+wJ^{h_{k}}p_{x}+F_{2},\text{ \ \  \ \ \ }q(x,0)=0,  \label{nonpqB}
\end{eqnarray}
where
\begin{eqnarray}
        F_{1} &=&(J^{h_{k}}-J^{h_{m}})(v_{h_{m}})_{x}  \label{f1} \\
        F_{2} &=&B(J^{h_{k}}-J^{h_{m}})(u_{h_{m}})_{x}+w(J^{h_{k}}-J^{h_{m}})(u_{h_{m}})_{x}.  \label{f2}
\end{eqnarray}
Note that (\ref{nonpqA})-(\ref{nonpqB}) is of the form of the nonhomogeneous system (\ref{nonA})-(\ref{nonB}). If we replace $(u,v)$ in (\ref{energy}) by $(p,q)$ we get the energy  $\mathcal{E}_{s}=\mathcal{E}_{s}(p,q)$ associated with (\ref{nonpqA})-(\ref{nonpqB}).  By following Remark \ref{rem2.4},  we write the estimate
\begin{equation}
    \frac{d}{dt}\mathcal{E}_{s}^{2}\leq C \mathcal{E}_{s}^{2}+C\Big(\Vert F_{1}\Vert_{H^{s}}+\Vert F_{2}\Vert_{H^{s}}\Big)\mathcal{E}_{s}, \label{lin-est}
\end{equation}
where the term $\Vert w\Vert_{Y^{s}}$ has been incorporated into the constant. By the mollifier estimate (\ref{mol-est}) we have
\begin{displaymath}
    \big\Vert (J^{h_{k}}-J^{h_{m}})z_{x}\big\Vert_{H^{s-2}}
            \leq C ~\vert h_{k}-h_{m}\vert ~\Vert z_{x}\Vert_{H^{s-1}} \leq C ~\vert h_{k}-h_{m}\vert ~\Vert z\Vert_{H^{s}}.
\end{displaymath}
Then, one has $\Vert F_{i} \Vert_{H^{s-2}}\leq C \vert h_{k}-h_{m}\vert $ for $i=1,2$. Replacing $s$ in (\ref{lin-est})  by $s-2>3/2$, we get
\begin{equation}
    \frac{d}{dt}\mathcal{E}_{s-2}^{2}\leq C ~\mathcal{E}_{s-2}^{2}+C~\vert h_{k}-h_{m}\vert ~ \mathcal{E}_{s-2},~~~~\mathcal{E}_{s-2}^2(0)=0.
\end{equation}
Then, Gronwall's inequality gives $\mathcal{E}_{s-2}\leq C\vert h_{k}-h_{m}\vert$. Hence $u_{h_{k}},v_{h_{k}}$ are Cauchy in $X^{s-2}$; and thus they converge  in $X^{s-2}$. By uniqueness of the limit, this limit  must be the weak limit $\bar{u},\bar{v}\in X^{s}$. Finally it is quite straightforward to see that $\bar{u},\bar{v}$ indeed solve the Cauchy problem (\ref{linsysa})-(\ref{linsysb}). So we have established the local well-posedness of solutions to  (\ref{linsysa})-(\ref{linsysb}).
\begin{lemma}\label{lem2.6}
 Suppose $\mu$ satisfies  $0<c_{1}\leq  \widehat{\mu }(\xi )\leq c_{2}$ for some constants $c_{1}$ and $c_{2}$.     Let $s>7/2$  and $Y^{s}=  C\big( [0, T], H^{s}\big) \cap C^{1}\big( [0,T],H^{s-1}\big)$.  Let $u_{0},v_{0}\in H^{s}$ and $w\in Y^{s}$ with $~0<d_{1}\leq c_{1}+w(x,t)~$   for all $(x,t) \in \mathbb{R}\times [0, T] $ for some constant $d_{1}$. Then there exist unique $u,v\in Y^{s}$ satisfying (\ref{linsysa})-(\ref{linsysb}) on $\mathbb{R}\times [ 0,T]$.
\end{lemma}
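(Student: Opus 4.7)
The plan is to follow the standard Taylor-type hyperbolic scheme already set up in the discussion preceding the lemma. First I would invoke unique solvability of the mollified Cauchy problem (\ref{molA})-(\ref{molB}) in $X^s \times X^s$ via classical ODE theory, since $J^h$, $B$, and multiplication by $w(\cdot,t)$ are all bounded linear operators on $H^s$. Lemma \ref{lem2.3} then provides the uniform-in-$h$ estimate $\mathcal{E}_s^2(u_h, v_h)(t) \leq \mathcal{E}_s^2(0)\, e^{C T \|w\|_{Y^s}}$, which by (\ref{kernelbound}) and (\ref{hyperbolic}) translates into a uniform bound on $\|u_h\|_{X^s} + \|v_h\|_{X^s}$. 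A subsequence then extracts weak-$*$ limits $\bar u, \bar v \in X^s$.

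Next I would upgrade weak to strong convergence on a slightly less regular space. Setting $p = u_{h_k} - u_{h_m}$ and $q = v_{h_k} - v_{h_m}$, the pair solves the nonhomogeneous system (\ref{nonpqA})-(\ref{nonpqB}) with forcings (\ref{f1})-(\ref{f2}), and the mollifier bound (\ref{mol-est}) yields $\|F_i\|_{H^{s-2}} \leq C|h_k - h_m|$ for $i=1,2$. Applying the nonhomogeneous estimate (\ref{nonest}) of Remark \ref{rem2.4} at Sobolev level $s-2$, which requires $s - 2 > 3/2$ (precisely the hypothesis $s > 7/2$), and invoking Gronwall gives $\mathcal{E}_{s-2}(p,q) \leq C|h_k - h_m|$. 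Hence $(u_{h_k}, v_{h_k})$ is Cauchy in $X^{s-2}$, and its strong limit must coincide with the already-identified weak limit $(\bar u, \bar v)$. Since $s - 2 > 3/2$ gives $H^{s-2} \hookrightarrow L^\infty$, this strong convergence suffices to pass to the limit termwise in (\ref{molA})-(\ref{molB}) and conclude that $(\bar u, \bar v)$ satisfies (\ref{linsysa})-(\ref{linsysb}) in the distributional sense.

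It then remains to promote $X^s$ regularity to $Y^s$ regularity and to prove uniqueness. From (\ref{linsysa})-(\ref{linsysb}) one reads off $\bar u_t = \bar v_x \in C([0,T], H^{s-1})$ and $\bar v_t = B \bar u_x + w\, \bar u_x \in C([0,T], H^{s-1})$, using that $B$ is bounded on every $H^r$ and that $H^{s-1}$ is a Banach algebra since $s - 1 > 3/2$; together with a standard weak-continuity argument that upgrades boundedness in $X^s$ to continuity in the $H^s$-norm, this places $(\bar u, \bar v)$ in $Y^s$. Uniqueness follows by applying the nonmollified version of Lemma \ref{lem2.3} (Remark \ref{rem2.5}) to the difference of two solutions with identical initial data, which forces $\mathcal{E}_s \equiv 0$ on $[0,T]$. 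The main subtlety throughout is the two-derivative loss incurred in the Cauchy-sequence estimate, which both forces the regularity threshold $s > 7/2$ and means that top-order regularity of the limit is only obtained through weak compactness; recovering genuine $H^s$-continuity in time from weak $H^s$-continuity plus strong $H^{s-1}$-continuity of the time derivative is the most delicate bookkeeping step.
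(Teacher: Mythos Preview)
Your proposal is correct and follows essentially the same route as the paper: solve the mollified system by ODE theory, use Lemma~\ref{lem2.3} for a uniform $X^s$ bound and weak compactness, then use Remark~\ref{rem2.4} at level $s-2$ together with the mollifier estimate (\ref{mol-est}) to show the sequence is Cauchy in $X^{s-2}$, identify the limit, and pass to the limit in the equations. Your additional remarks on reading off $Y^s$ regularity from the equations and on uniqueness via Remark~\ref{rem2.5} are natural complements that the paper leaves implicit.
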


\section{Local well-posedness for the nonlinear system}
\label{Sec:3}

Once having proved the well-posedness of the linearized system, the next stage is to prove the local well-posedness of the nonlinear system (\ref{systema})-(\ref{systemb}).  In the proof of the main theorem, we will make use of the nonlinear estimates (see \cite{Alinhac2007, Constantin2002}) in the following lemma:
\begin{lemma}\label{lem3.1}
    Let $h\in C^{\infty }(\mathbb{R})$ with $h(0)=0$. Then, for any $s\geq 0$ and $u,v\in L^{\infty}\cap H^{s}$,
    \begin{enumerate}
    \item $h(u)\in H^{s}$ with $\ \Vert h(u)\Vert _{H^{s}}\leq C_{1}\Vert u\Vert _{H^{s}}$ where $C_{1}$ depends on $h$ and $\Vert u\Vert _{L^{\infty }}.$
    \item $\Vert h(u)-h(v)\Vert _{H^{s}}\leq C_{2}\Vert u-v\Vert _{H^{s}}$ where $C_{2}$ depends on $h$ and $\Vert u\Vert_{L^{\infty }},\Vert v\Vert_{L^{\infty}},\Vert u\Vert_{H^{s}},$ and $\Vert v\Vert _{H^{s}}.$
    \end{enumerate}
\end{lemma}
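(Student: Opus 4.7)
The plan is to derive both bounds from the Kato--Ponce (fractional Leibniz) product estimate
$$\|fg\|_{H^s}\le C\bigl(\|f\|_{L^\infty}\|g\|_{H^s}+\|f\|_{H^s}\|g\|_{L^\infty}\bigr),\qquad s\ge 0,$$
which I will treat as the main black-box analytic tool. The classical strategy is then to combine this with the chain rule and an induction on~$s$, interpolating to cover non-integer orders.

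For part (1) I would first reduce to the case of an integer $s$. The base case $s=0$ is just the pointwise bound $|h(u(x))|\le \bigl(\sup_{|z|\le\|u\|_{L^\infty}}|h'(z)|\bigr)\,|u(x)|$, which follows from $h(0)=0$ and the mean value theorem. For the inductive step from $s$ to $s+1$ I would use $\partial_x h(u)=h'(u)\,u_x$ together with Kato--Ponce:
$$\|h'(u)u_x\|_{H^s}\le C\bigl(\|h'(u)\|_{L^\infty}\|u_x\|_{H^s}+\|h'(u)\|_{H^s}\|u_x\|_{L^\infty}\bigr).$$
The first factor on the right is finite because $h'$ is continuous and $u\in L^\infty$; the second is controlled by applying the induction hypothesis to the smooth function $\tilde h(z):=h'(z)-h'(0)$, which still vanishes at $0$. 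A Sobolev embedding controls $\|u_x\|_{L^\infty}$ in higher-order cases, and for the lower ones one just uses $\|u_x\|_{L^\infty}\le\|u\|_{H^{s+1}}$ when applicable.

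For part (2) the key identity is the factorization
$$h(u)-h(v)=(u-v)\,G(u,v),\qquad G(u,v)=\int_0^1 h'\bigl(\tau u+(1-\tau)v\bigr)\,d\tau,$$
after which Kato--Ponce gives
$$\|h(u)-h(v)\|_{H^s}\le C\bigl(\|u-v\|_{L^\infty}\|G(u,v)\|_{H^s}+\|u-v\|_{H^s}\|G(u,v)\|_{L^\infty}\bigr).$$
The pointwise bound $\|G(u,v)\|_{L^\infty}\le \sup_{|z|\le\max(\|u\|_{L^\infty},\|v\|_{L^\infty})}|h'(z)|$ is immediate, and the $H^s$ norm of $G$ is handled by passing the $H^s$ norm under the $\tau$-integral and applying part (1) (with $h'-h'(0)$ in place of $h$) to the convex combination $\tau u+(1-\tau)v$ for each $\tau$. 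The constant produced in this way depends precisely on $\|u\|_{L^\infty},\|v\|_{L^\infty},\|u\|_{H^s},\|v\|_{H^s}$, as claimed.

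The main obstacle I expect is the case of fractional $s$: for integer $s$ the chain rule plus induction plus Kato--Ponce is straightforward, but for general $s\ge 0$ one must either work in a Littlewood--Paley framework (decomposing $h(u)$ via Bony's paraproducts and controlling each piece) or invoke real/complex interpolation between consecutive integer orders. I would prefer the interpolation route since every ingredient used (the product inequality, the Moser bound at integer orders, the pointwise composition with a smooth $h$) is compatible with the interpolation scale $[H^{\lfloor s\rfloor},H^{\lceil s\rceil}]_\theta=H^s$, but the paraproduct route is more self-contained and would be the fallback.
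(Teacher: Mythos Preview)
The paper does not prove Lemma~3.1: it is simply stated with a citation to \cite{Alinhac2007,Constantin2002}, so there is no in-paper argument to compare against. Your sketch is essentially the standard Moser-estimate strategy, and for integer $s$ it is fine (with the minor caveat that the passage from $s=0$ to $s=1$ should be done directly via $\|h'(u)u_x\|_{L^2}\le\|h'(u)\|_{L^\infty}\|u_x\|_{L^2}$, since invoking Kato--Ponce there would require control of $\|u_x\|_{L^\infty}$, which you do not have at that regularity).

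The real gap is in your preferred treatment of fractional $s$. Interpolating between consecutive integer orders does \emph{not} deliver the estimate, because the map $u\mapsto h(u)$ is nonlinear: for $u\in H^s\cap L^\infty$ with $s\notin\mathbb{Z}$ you have no right to assume $u\in H^{\lceil s\rceil}$, so the integer-order bound at level $\lceil s\rceil$ is simply unavailable; and even for smooth $u$, interpolating the target norm only yields $\|h(u)\|_{H^s}\le C\|u\|_{H^{\lfloor s\rfloor}}^{1-\theta}\|u\|_{H^{\lceil s\rceil}}^{\theta}$, which does not dominate from above by $C\|u\|_{H^s}$. Your ``fallback'' paraproduct/Littlewood--Paley route is therefore not a fallback but the necessary argument---and it is precisely what the reference \cite{Alinhac2007} carries out. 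A secondary point on part~(2): the Kato--Ponce step leaves a factor $\|u-v\|_{L^\infty}$ that cannot be converted into $\|u-v\|_{H^s}$ when $s\le 1/2$; this is harmless for the paper's applications (all at $s>3/2$), but it is a genuine restriction of the argument as you have written it.
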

The main result of this section is:
\begin{theorem}\label{theo3.2}
    Suppose $\mu$ satisfies  $0<c_{1}\leq  \widehat{\mu}(\xi )\leq c_{2}$ for some constants $c_{1}$ and $c_{2}$.     Let $s>7/2$ and $u_{0},v_{0}\in H^{s}$ be sufficiently small. Then there exists some $T>0$ so that the nonlinear system (\ref{systema})-(\ref{systemb}) is locally well posed with solution $u, v\in Y^{s}=  C\big( [0, T], H^{s}\big) \cap C^{1}\big( [0,T],H^{s-1}\big)$.
\end{theorem}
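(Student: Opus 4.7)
The plan is a standard Picard iteration built on Lemma \ref{lem2.6}. Starting from the constant-in-time pair $(u^0,v^0)\equiv(u_0,v_0)$, I would define $(u^{n+1},v^{n+1})\in Y^s$ as the unique solution of the linear Cauchy problem (\ref{linsysa})--(\ref{linsysb}) with coefficient $w=g'(u^n)$ and the prescribed initial data. Invoking Lemma \ref{lem2.6} at each step requires $g'(u^n)\in Y^s$ and the hyperbolicity bound $c_1+g'(u^n)(x,t)\geq d_1>0$. Since $g'(0)=0$, Lemma \ref{lem3.1}(1) applied to $h=g'$ gives $\Vert g'(u^n)\Vert_{H^s}\leq C\Vert u^n\Vert_{H^s}$. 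For the time-derivative piece $\partial_t g'(u^n)=g''(u^n)u^n_t$, I would write $g''(u^n)=g''(0)+\tilde g(u^n)$ with $\tilde g(0)=0$; the first summand contributes $|g''(0)|\Vert u^n_t\Vert_{H^{s-1}}$, while Lemma \ref{lem3.1}(1) applied to $\tilde g$ and the algebra property of $H^{s-1}$ (valid as $s>3/2$) control the second. The Sobolev embedding $H^{s-1}\hookrightarrow L^\infty$ (using $s-1>5/2$) then yields $\Vert g'(u^n)\Vert_{L^\infty}\leq C\Vert u^n\Vert_{H^{s-1}}$, which can be kept below $c_1/2$ by smallness, so that the hyperbolicity condition holds with $d_1=c_1/2$.

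To make the iteration preserve a closed ball $\mathcal{B}_R=\{(u,v)\in Y^s:\Vert u\Vert_{Y^s}+\Vert v\Vert_{Y^s}\leq R\}$ of small radius $R$, I would invoke the energy estimate of Lemma \ref{lem2.3}, which by Remark \ref{rem2.5} applies without the mollifier and yields $\mathcal{E}_s(u^{n+1},v^{n+1})(t)\leq \mathcal{E}_s(0)\exp\big(Ct\Vert g'(u^n)\Vert_{Y^s}\big)$. Since $\mathcal{E}_s$ is equivalent to the $H^s\times H^s$ norm and $\Vert g'(u^n)\Vert_{Y^s}$ is bounded by a constant times $R$ on $\mathcal{B}_R$, choosing $(u_0,v_0)$ small and $T>0$ small forces $(u^{n+1},v^{n+1})\in\mathcal{B}_R$. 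For contraction, set $p^n=u^{n+1}-u^n$, $q^n=v^{n+1}-v^n$; subtracting consecutive iterates gives
\begin{equation*}
p^n_t=q^n_x,\qquad q^n_t=Bp^n_x+g'(u^n)p^n_x+\bigl(g'(u^n)-g'(u^{n-1})\bigr)u^n_x,
\end{equation*}
with zero initial data, a system of the form (\ref{nonA})--(\ref{nonB}) with $F_1=0$ and $F_2=(g'(u^n)-g'(u^{n-1}))u^n_x$. Because of the regularity loss when passing from $u^n$ to $g'(u^n)$ through Lemma \ref{lem3.1}(2), I would run the contraction at the lower level $s-1>5/2$: Lemma \ref{lem3.1}(2) applied to $h=g'$ combined with the algebra property of $H^{s-1}$ yields $\Vert F_2\Vert_{H^{s-1}}\leq CR\Vert p^{n-1}\Vert_{H^{s-1}}$, and the nonhomogeneous estimate (\ref{nonest}) plus Gronwall's inequality produces $\Vert(p^n,q^n)\Vert_{Y^{s-1}}\leq \theta\Vert(p^{n-1},q^{n-1})\Vert_{Y^{s-1}}$ with $\theta<1$ for $T$ small.

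Hence $(u^n,v^n)$ is Cauchy in $Y^{s-1}$ while uniformly bounded in $Y^s$, so the limit $(u,v)$ lies in $X^s\cap C^1([0,T],H^{s-1})$ and solves (\ref{systema})--(\ref{systemb}); strong $H^s$-continuity in $t$ is recovered by a standard energy/approximation argument, and uniqueness follows from the same $Y^{s-1}$ energy estimate applied to the difference of two candidate solutions. I expect the main technical obstacle to be the uniform control of $g'(u^n)$ in $Y^s$, and specifically the bound on $g''(u^n)u^n_t$ in $C([0,T],H^{s-1})$, since $g''(0)$ need not vanish and Lemma \ref{lem3.1} does not apply to $g''$ directly; the decomposition $g''(u^n)=g''(0)+\tilde g(u^n)$ mentioned above resolves this, provided one is careful to track all the smallness thresholds (ball invariance, hyperbolicity, and contraction) so that they can be met simultaneously by one choice of $R$, $T$, and initial-data size.
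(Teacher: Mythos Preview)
Your proposal is correct and follows essentially the same Picard-iteration route as the paper: solve the linearized system at each step via Lemma~\ref{lem2.6}, use the energy estimate of Lemma~\ref{lem2.3}/Remark~\ref{rem2.5} to keep the iterates in a small ball, and run the contraction at level $s-1$ using the nonhomogeneous estimate (\ref{nonest}) with $F_2=(g'(u^n)-g'(u^{n-1}))u^n_x$, then recover $H^s$ regularity by a weak-limit argument. You are in fact more careful than the paper about bounding $\Vert g'(u^n)\Vert_{Y^s}$ through the decomposition $g''(u^n)=g''(0)+\tilde g(u^n)$; the paper simply asserts that the relevant constant depends only on~$\gamma$ (and note that the reason for dropping to $s-1$ is not a loss in Lemma~\ref{lem3.1}(2) itself but the factor $u^n_x\in H^{s-1}$ in $F_2$).
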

\begin{proof}
    The proof is done via Picard's iterations. We employ the energy estimate in Lemma \ref{lem2.3} taking $J^{h}=I$ and $w=g^{\prime}(u)$ in (\ref{molA})-(\ref{molB}). Due to (\ref{hyperbolic}) we ask the initial value $u(x,0)=u_{0}(x)$ to satisfy
    \begin{equation*}
        0<d_{1}\leq c_{1}+g^{\prime }(u_{0}(x))
    \end{equation*}%
    for all $x\in \mathbb{R}$.  Since $g^{\prime}(0)=0,$  we have $0<c_{1}\leq 1+ g^{\prime }(z)$ for sufficiently small $\vert z\vert$. By the Sobolev embedding theorem,  there is some $\gamma $ so that $0<d_{1}\leq c_{1}+g^{\prime}(z(x,t))$ whenever $\Vert z(t)\Vert _{H^{s}}\leq \gamma$.

    We assume that $\Vert u_{0}\Vert _{H^{s}}+\Vert v_{0}\Vert_{H^{s}}\leq \frac{\gamma }{2}$. Consequently  $w_{0}=g^{\prime }(u_{0})$ satisfies (\ref{hyperbolic}). We now consider  the iterates  $(u^{n+1},v^{n+1})$ solving  the following linear system
    \begin{eqnarray*}
        u_{t}^{n+1} &=&v_{x}^{n+1},~\text{ \ \ \ \ \ \ \ \ \ \ \ \ \ \ \ \ \ }u^{n+1}(x,0)=u_{0}(x), \\
        v_{t}^{n+1} &=&Bu_{x}^{n+1}+w_{n}u_{x}^{n+1},\text{ \ }v^{n+1}(x,0)=v_{0}(x)
    \end{eqnarray*}%
    with $(u^{0},v^{0})=(u_{0},v_{0})$ and $w_{n}=g^{\prime}(u^{n})$. By the energy estimate (\ref{linestimate}) of Lemma \ref{lem2.3}, we have
    \begin{equation*}
        \Vert u^{1}(t)\Vert _{H^{s}}+\Vert v^{1}(t)\Vert _{H^{s}}
        \leq \big(\Vert u_{0}\Vert _{H^{s}}+\Vert v_{0}\Vert _{H^{s}}\big)e^{Ct}\leq \frac{\gamma}{2}e^{Ct}\leq \gamma
    \end{equation*}%
    for   $t\leq T_{0}=\frac{\log 2}{C}$  with $C=C\big(\Vert g^{\prime }(u_{0})\Vert _{H^{s}}\big)\leq C(\gamma)$. For all $(u^{n},v^{n})$ a similar estimate holds. So $w_{n}$ satisfies (\ref{hyperbolic})  for all $x$ and $t\leq T_{0}=\frac{\log 2}{C}$. We now estimate the differences $( p^{n+1}, q^{n+1}) =(u^{n+1}-u^{n},v^{n+1}-v^{n})$. They satisfy the following system
    \begin{eqnarray}
        p_{t}^{n+1} &=&q_{x}^{n+1},
        ~\text{\ \ \ \ \ \ \ \ \ \ \ \ \ \ \ \ \ \ \ \ \ \ \ \ \ \ \ \ \ \ \ \ \ \ \ \ \ \ \ \ \ \ \ \ \ \ \ \ \ \ \ \ \ \ \ \ \ }p^{n+1}(x,0)=0, \label{ppp}\\
        q_{t}^{n+1} &=&Bp_{x}^{n+1}+g^{\prime}(u^{n})p_{x}^{n+1}+\big(g^{\prime}(u^{n})-g^{\prime }(u^{n-1})\big)u_{x}^{n},~\text{\ }~\text{ \ }q^{n+1}(x,0)=0. \label{qqq}
    \end{eqnarray}%
    Using the nonhomogeneous energy estimate (\ref{nonest}) of Remark \ref{rem2.4}  with  $F_{1}=0$, $F_{2}=\big(g^{\prime}(u^{n})-g^{\prime }(u^{n-1})\big)u_{x}^{n}$, $\mathcal{E}_{s}=\mathcal{E}_{s}(p,q)$ and $\mathcal{E}_{s}(0)=0$, we obtain
   \begin{equation}
        \mathcal{E}_{s}(t)\leq C ~\Vert F_{2}\Vert_{X^{s}}~\big(e^{C t}-1\big)  \label{nonestimate}
    \end{equation}
    for $t\in [0,T]$. Replacing $s$ by $s-1$ and using the definitions for $\mathcal{E}_{s-1}$ and $F_{2}$ we obtain
    \begin{eqnarray*}
        \Vert p^{n+1}(t)\Vert _{H^{s-1}}+\Vert q^{n+1}(t)\Vert_{H^{s-1}}
        &\leq & C ~\Big\Vert \Big(g^{\prime }(u^{n}(t))-g^{\prime}(u^{n-1}(t))\Big)u_{x}^{n}(t)\Big\Vert_{H^{s-1}}~(e^{Ct}-1) \\
        &\leq & C ~\big\Vert u^{n}(t)-u^{n-1}(t)\big\Vert_{H^{s-1}}~\big\Vert u_{x}^{n}(t)\big\Vert_{H^{s-1}}~(e^{Ct}-1) \\
        &\leq & C ~\big\Vert p^{n}(t)\big\Vert_{H^{s-1}}~(e^{Ct}-1),
    \end{eqnarray*}%
    where we have also used Lemma \ref{lem3.1}.     Choosing $T<T_{0}$ now so that $e^{CT}-1\leq \frac{1}{2C}$ we see that for $ t\leq T$,
    \begin{equation*}
        \Vert p^{n+1}(t)\Vert _{H^{s-1}}+\Vert q^{n+1}(t)\Vert_{H^{s-1}}
        \leq \frac{1}{2}\big(\Vert p^{n}(t)\Vert _{H^{s-1}}+\Vert q^{n}(t)\Vert _{H^{s-1}}\big)\leq \cdots \leq \frac{C}{2^{n}}.
    \end{equation*}%
    This shows that $(u^{n}, v^{n})$ forms a Cauchy sequence in $H^{s-1}$ and the limit $( u,v) $ will be a solution in $H^{s-1}.$ Finally, considering the weak limit of  $(u^{n},v^{n})$ in $X^{s}$ as was done in the proof of Lemma \ref{lem2.6}, we obtain regularity, namely that $u,v\in C\big([ 0,T], H^{s}\big)$.
\end{proof}

\section{The vanishing dispersion limit of the nonlocal equation}
\label{Sec:4}

In this section we will consider a parameterized form of (\ref{nw}) in which the operator $B$ is replaced by the family of convolution operators  $B_{\varepsilon}$. We then show that for two different types of the vanishing dispersion limit, solutions of the Cauchy problem for the parameterized form of  (\ref{nw}) converge to the corresponding solution of the classical elasticity equation (\ref{classical}). Before we start working with two different forms of the kernel, it is important to remember that the energy estimate of Lemma \ref{lem2.3} requires $s>3/2$ while the local well-posedness result of Theorem \ref{theo3.2} requires $s>7/2$.

\subsection{First type of vanishing dispersion limit}
In this case we assume that $B_{\varepsilon}=(\delta +\varepsilon \beta)\ast$ with a small parameter $\varepsilon$, a fixed $L^{1}$  function $\beta$ and the Dirac measure $\delta$, in which  $B_{\varepsilon}u=u+\varepsilon (\beta\ast u)$. In other words, we consider the nonlocal equation  (\ref{nwplus}) and study convergence of the solutions to the nonlocal equation to the corresponding solution of (\ref{classical}) as $\varepsilon$ approaches zero. In terms of the first-order nonlinear systems this means that we are comparing solutions $(u^{\varepsilon},v^{\varepsilon})$
of the Cauchy problem
\begin{eqnarray}
    && u_{t}=v_{x},\text{ \ \ \ \ \ \ \ \ \ \ \ \ \ \ \ \ \ \ \ \ \ \ \ \ \ \ \ \ \ \ \ }u(x,0)=u_{0}(x),~~\label{firsta} \\
    && v_{t}=\varepsilon (\beta \ast u_{x})+u_{x}+(g(u))_{x},\text{ \ \ \ }v(x,0)=v_{0}(x)  \label{firstb}
\end{eqnarray}%
with the  solution $(u,v)$ of the $p$-system (which corresponds to the case   $\varepsilon=0$) for the same initial data.   By the local existence theorem we know that for $u_{0}, v_{0}\in  H^s$  sufficiently small and for some $T > 0$ both nonlinear systems are locally well-posed with solutions in $Y^{s}$. Moreover,  with a careful examination of the involved energies, the existence time can be chosen independent of  $\varepsilon \geq 0$.

Then the differences $(p,q)=(u^{\varepsilon}-u, v^{\varepsilon}-v)$ satisfy
\begin{eqnarray}
    p_{t} &=&q_{x},
    ~\text{\ \ \ \ \ \ \ \ \ \ \ \ \ \ \ \ \ \ \ \ \ \ \ \ \ \ \ \ \ \ \ \ \ \ \ \ \ \ \ \ \ \ \ }p(x,0)=0,
    \label{typ1a}\\
    q_{t} &=&\varepsilon (\beta\ast u^{\varepsilon}_{x})+p_{x}+\big(g(u^{\varepsilon})-g(u)\big)_{x},~\text{\ }~\text{ \ }q(x,0)=0.  \label{type1b}
\end{eqnarray}%
We now  rearrange the last term in (\ref{type1b}) as follows
\begin{eqnarray*}
    \big(g(u^{\varepsilon})-g(u)\big)_{x}
     &=&g^{\prime}(u^{\varepsilon})u^{\varepsilon}_{x}-g^{\prime}(u)u_{x}
            -g^{\prime}(u^{\varepsilon})u_{x}+g^{\prime}(u^{\varepsilon})u_{x},  \\
     &=& g^{\prime}(u^{\varepsilon})p_{x}+\big(g^{\prime}(u^{\varepsilon})-g^{\prime}(u)\big)u_{x}.
\end{eqnarray*}%
Using this result in (\ref{type1b}) we get
\begin{eqnarray}
    p_{t} &=&q_{x},
    ~\text{\ \ \ \ \ \ \ \ \ \ \ \ \ \ \ \ \ \ \ \ \ \ \ \ \ \ \ \ \ \ \ \ \ \ \ \ \ \ \ \ \  \ \ \ \ \ \ \ \  \ \  \ \ \ \  \ \ \ \ \ \ \ \ \ \  \ \ \ \ \ \ \ \ \ \ \ }p(x,0)=0,
    \label{typ1ay}\\
    q_{t} &=&\varepsilon (\beta\ast p_{x})+p_{x}+ g^{\prime}(u^{\varepsilon})p_{x} +\varepsilon (\beta \ast u_{x})+\big(g^{\prime}(u^{\varepsilon})-g^{\prime}(u)\big)u_{x},~\text{\ }~\text{ \ }q(x,0)=0.  \label{type1by}
\end{eqnarray}%
This system is of the form (\ref{nonA})-(\ref{nonB}) with $J^{h}=I$, $Bp_{x}=\varepsilon (\beta\ast p_{x})+p_{x}$, $w=g^{\prime}(u^{\varepsilon})$, $F_{1}=0$ and $F_{2}=\varepsilon (\beta\ast u_{x})+\big(g^{\prime}(u^{\varepsilon})-g^{\prime}(u)\big)u_{x}$. Thus, noting that $\mathcal{E}_{s}^{2}(t)\approx \Vert p(t)\Vert^{2}_{H^{s}}+\Vert q(t)\Vert^{2}_{H^{s}}$, we get
\begin{equation}
    \frac{d}{dt}\mathcal{E}_{s}^{2}\leq C ~\Vert g^{\prime}(u^{\varepsilon})\Vert_{Y^{s}}~\mathcal{E}_{s}^{2}+C~\Vert F_{2}\Vert_{H^{s}}~\mathcal{E}_{s}  \label{typ1est}
\end{equation}
from (\ref{nonest}). On the other hand, we have
\begin{eqnarray}
  \Vert F_{2}\Vert_{H^{s-1}}
    &\leq &   \varepsilon \Vert (\beta \ast u_{x})\Vert_{H^{s-1}}
            +\Vert\big(g^{\prime}(u^{\varepsilon})-g^{\prime}(u)\big)u_{x}\Vert_{H^{s-1}} \\
    &\leq &  \varepsilon C \Vert u\Vert_{H^{s}}
            +C \Vert u^{\varepsilon}-u\Vert_{H^{s-1}~}~ \Vert u\Vert_{H^{s}} \\
    &\leq & C \big(\varepsilon+\Vert p\Vert_{H^{s-1}}\big)\leq C \big(\varepsilon+\mathcal{E}_{s-1}\big).
\end{eqnarray}
Replacing $s$ in (\ref{typ1est}) by $s-1$,  this in turn yields
\begin{equation*}
        \frac{d}{dt}\mathcal{E}^{2}_{s-1}
            \leq C~\mathcal{E}^{2}_{s-1}+\varepsilon C ~\mathcal{E}_{s-1}, ~~~~\mathcal{E}_{s-1}(0)=0,
    \end{equation*}%
from which  we get $\mathcal{E}_{s-1}(t)\leq \varepsilon \big(e^{Ct}-1\big)$; namely
\begin{theorem}\label{theo4.1}
    Let $s>7/2$ and  $u_{0},v_{0}\in H^{s}$ be sufficiently small. Suppose $0<c_{1}\leq  1+\varepsilon \widehat{\beta }\leq c_{2} $ for all sufficiently small $\varepsilon$ and for some constants $c_{1}$, $c_{2}$.  Let $(u^{\varepsilon},v^{\varepsilon})$ and $(u,v)$ be solutions of the Cauchy problem  (\ref{firsta})-(\ref{firstb}) on $[0,T]$  corresponding  to the cases  $\varepsilon>0$ and $\varepsilon=0$, respectively. Then we have the estimate
        \begin{equation*}
             \big\Vert u^{\varepsilon}(t) -u(t) \big\Vert_{H^{s-1}}+\big\Vert v^{\varepsilon}(t) -v(t) \big\Vert _{H^{s-1}}\leq \varepsilon \big(e^{Ct}-1\big) ~~~~\text{for all} ~~ t\in [0,T].
        \end{equation*}
\end{theorem}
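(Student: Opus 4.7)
The plan is to compare the two systems directly by studying the differences $p = u^{\varepsilon} - u$ and $q = v^{\varepsilon} - v$, deriving a nonhomogeneous linear system for $(p,q)$, and then applying the energy estimate from Remark \ref{rem2.4} together with Gronwall's inequality. First I would invoke Theorem \ref{theo3.2} to guarantee that both $(u^{\varepsilon}, v^{\varepsilon})$ and $(u,v)$ exist in $Y^s$ on $[0,T]$; because the hyperbolicity constant in (\ref{hyperbolic}) and the energy estimate (\ref{linestimate}) depend on $c_1, c_2$ only through the uniform bound $0 < c_1 \leq 1 + \varepsilon \widehat{\beta} \leq c_2$, the existence time $T$ may be chosen independently of $\varepsilon$ for $\varepsilon$ small, as already noted before the statement.

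Subtracting the two systems and using the algebraic identity
\begin{equation*}
\bigl(g(u^{\varepsilon}) - g(u)\bigr)_x = g'(u^{\varepsilon}) p_x + \bigl(g'(u^{\varepsilon}) - g'(u)\bigr) u_x,
\end{equation*}
I would recast the equations for $(p,q)$ in the nonhomogeneous form (\ref{nonA})--(\ref{nonB}) with $J^h = I$, $B p_x = p_x + \varepsilon (\beta \ast p_x)$, $w = g'(u^{\varepsilon})$, $F_1 = 0$, and $F_2 = \varepsilon (\beta \ast u_x) + \bigl(g'(u^{\varepsilon}) - g'(u)\bigr) u_x$. Since $\|u^{\varepsilon}\|_{Y^s}$ remains uniformly bounded in $\varepsilon$, $w$ satisfies (\ref{hyperbolic}) with constants independent of $\varepsilon$, so Remark \ref{rem2.4} (via Remark \ref{rem2.5}) applies directly and gives
\begin{equation*}
\frac{d}{dt}\mathcal{E}_{s-1}^2 \leq C\,\mathcal{E}_{s-1}^2 + C\,\|F_2\|_{H^{s-1}}\,\mathcal{E}_{s-1}
\end{equation*}
after lowering the index by one (this is permissible since $s - 1 > 5/2 > 3/2$).

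To close the estimate I would bound the forcing term: the convolution $\beta \ast u_x$ is controlled by $\|u\|_{H^s}$ using Young's inequality and the $L^1$ assumption on $\beta$, while $\|(g'(u^{\varepsilon}) - g'(u)) u_x\|_{H^{s-1}}$ is controlled by $\|p\|_{H^{s-1}} \|u\|_{H^s} \lesssim \mathcal{E}_{s-1}$ via Lemma \ref{lem3.1} (applied to $h = g'$) together with the algebra property of $H^{s-1}$. This gives $\|F_2\|_{H^{s-1}} \leq C(\varepsilon + \mathcal{E}_{s-1})$, hence
\begin{equation*}
\frac{d}{dt}\mathcal{E}_{s-1}^2 \leq C\,\mathcal{E}_{s-1}^2 + C\varepsilon\,\mathcal{E}_{s-1}, \qquad \mathcal{E}_{s-1}(0) = 0,
\end{equation*}
and Gronwall's inequality on the resulting first-order differential inequality for $\mathcal{E}_{s-1}$ yields $\mathcal{E}_{s-1}(t) \leq \varepsilon(e^{Ct} - 1)$. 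Since $\mathcal{E}_{s-1}^2 \approx \|p\|_{H^{s-1}}^2 + \|q\|_{H^{s-1}}^2$, the stated estimate follows.

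The main obstacle I anticipate is verifying that the existence time and the constants $C$ appearing above can genuinely be chosen independently of $\varepsilon$; this is a bookkeeping issue rather than a conceptual one, but it requires checking that the constants in Lemmas \ref{lem2.1}--\ref{lem2.2} and in the nonlinear estimate of Lemma \ref{lem3.1} depend only on $\|u^{\varepsilon}\|_{Y^s}$ (which is uniformly bounded) and on the kernel bounds $c_1, c_2$ in (\ref{kernelbound}), not on $\varepsilon$ itself. The derivative loss from $H^s$ to $H^{s-1}$ is exactly what buys us the factor of $\varepsilon$, since the high-frequency piece $\varepsilon(\beta \ast u_x)$ of $F_2$ is uniformly bounded in $H^{s-1}$ by $\varepsilon \|u\|_{H^s}$.
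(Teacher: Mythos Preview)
Your proposal is correct and follows essentially the same route as the paper: the same difference system for $(p,q)$, the same algebraic rewriting of $(g(u^\varepsilon)-g(u))_x$, the same identification with the nonhomogeneous form (\ref{nonA})--(\ref{nonB}) and forcing $F_2$, the same $H^{s-1}$ bound on $F_2$, and the same Gronwall conclusion. Your additional remarks on the $\varepsilon$-independence of the constants and the role of the derivative loss are accurate and make explicit what the paper only alludes to in the sentence preceding the theorem.
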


\subsection{Second type of vanishing dispersion limit}
In this case we assume that $B_{\varepsilon}u=\mu_{\varepsilon}\ast u$ with $\mu_{\varepsilon}(x)=\frac{1}{\sqrt{\varepsilon}}\mu(\frac{x}{\sqrt{\varepsilon}})$ and a small parameter $\varepsilon$. Here $\mu$ is an even finite Borel measue with  $\int_{\mathbb{R}} ~d\mu=1$. As $\varepsilon$ tends to zero, $\mu_{\varepsilon}$ converges to the Dirac measure $\delta$. The Fourier transforms of $\mu_{\varepsilon}$ and $\mu$ satisfy $\widehat{\mu_{\varepsilon}}(\xi)=\widehat{\mu}(\sqrt{\varepsilon} \xi)$.   We also assume the second moment condition  $\int_{\mathbb{R}} x^{2}d\vert \mu\vert <\infty$, so that $\widehat{\mu}\in C^2$, $\widehat{\mu}(0)=1$, $\widehat{\mu}^{\prime}(0)=0$ and the second derivative $\widehat{\mu}^{\prime\prime}$ is bounded. The Taylor expansion around $\xi=0$ gives
\begin{equation}
        \widehat{\mu}(\xi)=1+\frac{1}{2}\widehat{\mu}^{\prime\prime}(c)\xi^2   \label{bhexp}
\end{equation}
for some $c\in \mathbb{R}$. As $\widehat{\mu}^{\prime\prime}$ is bounded due to the moment condition; we have $\big\vert \widehat{\mu}(\xi)-1\big\vert \leq C\xi^2$ with $C=\frac{1}{2}\sup_{c\in \mathbb{R}}\big\vert \widehat{\mu}^{\prime\prime}(c)\big\vert $. Then from the inequality
\begin{equation}
       \big\vert \widehat{\mu}_{\varepsilon}(\xi)-1\big\vert =\big\vert \widehat{\mu}(\sqrt{\varepsilon}\xi)-1\big\vert \leq C\varepsilon \xi^2,  \label{behexp}
\end{equation}
we get the estimate
\begin{equation*}
        \Vert \mu_{\varepsilon}\ast u-u\Vert_{H^{s-2}}\leq \varepsilon \Vert u\Vert_{H^{s}}.
\end{equation*}
Notice that $B_{\varepsilon}$ converges to $I$ as $\varepsilon \rightarrow 0$ and we get  (\ref{classical}). Again, we study convergence of the solutions of the nonlocal equation $u_{tt}=\mu_{\varepsilon}\ast u_{xx}+(g(u))_{xx}$ to the corresponding solution of (\ref{classical}) as $\varepsilon$ goes to zero. In terms of the first-order nonlinear systems this means that we are comparing solutions $(u^{\varepsilon},v^{\varepsilon})$
of the Cauchy problem
\begin{eqnarray}
    && u_{t}=v_{x},\text{ \ \ \ \ \ \ \ \ \ \ \ \ \ \  \ \ \ \ \ \ \ \ }u(x,0)=u_{0}(x),~~\label{seconda} \\
    && v_{t}=\mu_{\varepsilon}\ast u_{x}+(g(u))_{x},\text{ \ \ \ }v(x,0)=v_{0}(x)  \label{secondb}
\end{eqnarray}%
with the solution $(u,v)$ of the same Cauchy problem  when $\mu_{\varepsilon}$ is the Dirac delta function.   By the local existence theorem we know that for $u_{0}, v_{0}\in  H^s$  sufficiently small and for some $T > 0$ both nonlinear systems are locally well-posed with solutions in $Y^s$. As in the previous subsection,  with a careful examination of the involved energies, the existence time can be chosen independent of  $\varepsilon \geq 0$.

Then the differences $(p,q)=(u^{\varepsilon}-u, v^{\varepsilon}-v)$ satisfy
\begin{eqnarray*}
    p_{t} &=&q_{x},
    ~\text{\ \ \ \ \ \ \ \ \ \ \ \ \ \ \ \ \ \ \ \ \ \ \ \ \ \ \ \ \ \ \ \ \ \ \ \ \ \ \ \ \ \ \ \ \ \ \ \ \ \ \ \ \ \ \ \ \ \ \ \ \ \ \ \ \ \ \ \ \ \ \ \ \ \  }p(x,0)=0, \\
    q_{t} &=&\mu_{\varepsilon}\ast p_{x}+g^{\prime}(u^{\varepsilon})p_{x}+(\mu_{\varepsilon}\ast u_{x}-u_{x})+\big(g^{\prime}(u^{\varepsilon})-g^{\prime}(u)\big)u_{x},~\text{\ }~\text{ \ }q(x,0)=0.
\end{eqnarray*}%
The system is of the form (\ref{nonA})-(\ref{nonB}) with $J^{h}=I$, $Bp_{x}=\mu_{\varepsilon}\ast p_{x}$, $w=g^{\prime}(u^{\varepsilon})$, $F_{1}=0$ and $F_{2}=(\mu_{\varepsilon}\ast u_{x}-u_{x})+\big(g^{\prime}(u^{\varepsilon})-g^{\prime}(u)\big)u_{x}$. Again, (\ref{nonest}) reduces to (\ref{typ1est}) but with different $F_{2}$. We have the estimate
\begin{eqnarray}
  \Vert F_{2}\Vert_{H^{s-3}}
    &\leq &    \Vert \mu_{\varepsilon}\ast u_{x}-u_{x}\Vert_{H^{s-3}}
            +\Vert\big(g^{\prime}(u^{\varepsilon})-g^{\prime}(u)\big)u_{x}\Vert_{H^{s-3}} \\
    &\leq &  \varepsilon C \Vert u\Vert_{H^{s}}
            +C \Vert u^{\varepsilon}-u\Vert_{H^{s-3}~}~ \Vert u\Vert_{H^{s-2}} \\
    &\leq & C \big(\varepsilon+\Vert p\Vert_{H^{s-3}}\big)\leq C \big(\varepsilon+\mathcal{E}_{s-3}\big).
\end{eqnarray}
With this result and $s-3>3/2$, (\ref{nonest}) takes the form
\begin{equation*}
         \frac{d}{dt}\mathcal{E}^{2}_{s-3}
            \leq C~\mathcal{E}^{2}_{s-3}+\varepsilon C ~\mathcal{E}_{s-3}, ~~~~\mathcal{E}_{s-3}(0)=0,
    \end{equation*}%
So finally we get $\mathcal{E}_{s-3}(t)\leq \varepsilon\big(e^{Ct}-1\big)$; namely
\begin{theorem}\label{theo4.2}
    Let $s>9/2$ and  $u_{0},v_{0}\in H^{s}$ be sufficiently small.  Suppose $\int_{\mathbb{R}} ~d\mu=1$,  $\int_{\mathbb{R}} x^2~d\vert\mu\vert<\infty$  and $0<c_{1}\leq  \widehat{\mu }\leq c_{2} $  for some constants $c_{1}$, $c_{2}$. Let $(u^{\varepsilon},v^{\varepsilon})$ and $(u,v)$ be solutions of the Cauchy problem  (\ref{seconda})-(\ref{secondb}) on $[0,T]$  corresponding  to the cases  $\varepsilon>0$ and $\mu_{\varepsilon}=\delta$, respectively. Then we have the estimate
        \begin{equation*}
             \big\Vert u^{\varepsilon}(t) -u(t) \big\Vert_{H^{s-3}}+\big\Vert v^{\varepsilon}(t) -v(t) \big\Vert _{H^{s-3}}\leq \varepsilon \big(e^{Ct}-1\big) ~~~~\text{for all} ~~ t\in [0,T].
        \end{equation*}
\end{theorem}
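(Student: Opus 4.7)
The plan is to transplant the argument of Theorem \ref{theo4.1} into the present setting, the main novelty being a quantitative Fourier-side estimate for $\mu_\varepsilon \ast u - u$ derived from the second-moment condition on $\mu$.

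First I would form the differences $(p, q) = (u^\varepsilon - u, v^\varepsilon - v)$ and, using the same rearrangement $(g(u^\varepsilon) - g(u))_x = g'(u^\varepsilon) p_x + (g'(u^\varepsilon) - g'(u)) u_x$ as in Subsection 4.1, cast the system they satisfy in the nonhomogeneous linear form (\ref{nonA})--(\ref{nonB}) of Remark \ref{rem2.4}, with $B p_x = \mu_\varepsilon \ast p_x$, $w = g'(u^\varepsilon)$, $F_1 = 0$, and $F_2 = (\mu_\varepsilon \ast u_x - u_x) + (g'(u^\varepsilon) - g'(u)) u_x$. The nonhomogeneous energy inequality (\ref{nonest}) then produces a differential inequality of the same shape as (\ref{typ1est}). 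I would also note that the existence time for $(u^\varepsilon, v^\varepsilon)$ can be chosen independent of $\varepsilon \ge 0$, since the energy $\mathcal{E}_s^2$ is equivalent to $\|u\|_{H^s}^2 + \|v\|_{H^s}^2$ uniformly in $\varepsilon$ under the standing bound $c_1 \le \widehat{\mu}_\varepsilon \le c_2$.

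The central new step is the small-$\varepsilon$ bound $\|\mu_\varepsilon \ast u - u\|_{H^{s-2}} \le C \varepsilon \|u\|_{H^s}$. By Plancherel this reduces to the pointwise inequality (\ref{behexp}), which in turn follows from the Taylor expansion (\ref{bhexp}): because $\widehat{\mu}(0) = 1$, $\widehat{\mu}'(0) = 0$, and $\widehat{\mu}''$ is bounded thanks to the second-moment hypothesis $\int_{\mathbb{R}} x^2 \, d|\mu| < \infty$, one has $|\widehat{\mu}_\varepsilon(\xi) - 1| = |\widehat{\mu}(\sqrt{\varepsilon}\,\xi) - 1| \le C \varepsilon \xi^2$. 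Applying this estimate with $u_x$ in place of $u$ at the level $s - 3$, and combining it with Lemma \ref{lem3.1} for the second term of $F_2$, yields $\|F_2\|_{H^{s-3}} \le C(\varepsilon + \mathcal{E}_{s-3})$.

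Finally, I would specialize the energy inequality (\ref{nonest}) to the regularity level $s - 3$, which requires $s - 3 > 3/2$, i.e., the hypothesis $s > 9/2$. Gronwall's inequality applied to $\mathcal{E}_{s-3}^2$ with $\mathcal{E}_{s-3}(0) = 0$ then gives $\mathcal{E}_{s-3}(t) \le \varepsilon(e^{Ct} - 1)$, and since $\mathcal{E}_{s-3}^2$ is equivalent to $\|p\|_{H^{s-3}}^2 + \|q\|_{H^{s-3}}^2$, this closes the argument. The main obstacle is the derivative loss built into the convolution estimate: the Fourier bound on $\widehat{\mu}_\varepsilon - 1$ produces its $O(\varepsilon)$ gain only at the cost of two derivatives, and because $F_2$ already carries an extra $x$-derivative on $u$, the energy closes only at regularity level $s - 3$. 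This is strictly more costly than in Theorem \ref{theo4.1}, where Young's inequality for $\beta \in L^1(\mathbb{R})$ absorbs the convolution with no derivative loss, and it is precisely this two-derivative deficit that raises the required regularity from $s > 7/2$ to $s > 9/2$.
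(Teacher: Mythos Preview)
Your proposal is correct and follows essentially the same route as the paper: form the differences $(p,q)$, rewrite the system in the nonhomogeneous linear form with $F_2=(\mu_\varepsilon\ast u_x-u_x)+(g'(u^\varepsilon)-g'(u))u_x$, use the second-moment Taylor bound $|\widehat{\mu}_\varepsilon(\xi)-1|\le C\varepsilon\xi^2$ to control the first term at the cost of two derivatives, and close the energy at level $s-3$ via Gronwall. Your explanation of why the regularity threshold rises from $s>7/2$ to $s>9/2$ is accurate and matches the paper's reasoning.
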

As an example of the second type of vanishing dispersion limit, we consider the measure $\mu=\frac{1}{5}(\delta_{-1}+3\delta+\delta_{1})$ with the Dirac measure $\delta$ and its shifts. Explicitly $\int f ~d\mu=\frac{1}{5}(f(-1)+3f(0)+f(1))$. Then the Fourier transform of $\mu$ is
\begin{displaymath}
    \widehat{\mu}(\xi)=\frac{1}{5}(e^{-i\xi}+3+e^{i\xi})=\frac{1}{5}(3+2\cos \xi)\geq \frac{1}{5}
\end{displaymath}
and it satisfies the positivity condition. Moreover $\int_{\mathbb{R}} d\mu=\widehat{\mu}(0)=1$. Theorem \ref{theo4.2} says that as $\varepsilon \rightarrow 0$, any solution of the equation
\begin{displaymath}
    u_{tt}(x,t)=\frac{1}{5}\big( u_{xx}(x+\sqrt{\varepsilon},t)+3u_{xx}(x,t)+ u_{xx}(x-\sqrt{\varepsilon},t)\big)+g(u(x,t))_{xx}
\end{displaymath}
approaches the corresponding solution of the classical elasticity equation  (\ref{classical}).

\end{document}